\newtheorem{thm}{Theorem}[section]
\newtheorem{corollary}[thm]{Corollary}
\newtheorem{lemma}[thm]{Lemma}
\newtheorem{proposition}[thm]{Proposition}
\theoremstyle{definition}
\newtheorem{definition}[thm]{Definition}
\newtheorem{remark}[thm]{Remark}
\newtheorem{example}[thm]{Example}
\begin{document}
\baselineskip=15pt
\title{Uniformly $S$-pseudo-projective modules}

\author[M. Adarbeh]{Mohammad Adarbeh $^{(\star)}$ \orcidlink{0000-0003-4702-3659}}
\address{Department of Mathematics, Birzeit University, Birzeit,  Palestine}
\email{madarbeh@birzeit.edu}
\author[M. Saleh]{Mohammad Saleh  \orcidlink{0000-0002-4254-2540}}
\address{Department of Mathematics, Birzeit University, Birzeit,  Palestine}
\email{msaleh@birzeit.edu}

\thanks{$^{(\star)}$ Corresponding author}
\date{}

\begin{abstract}
In this paper, we introduce the notion of uniformly $S$-pseudo-projective ($u$-$S$-pseudo-projective) modules as a generalization of $u$-$S$-projective modules. Let $R$ be a ring and $S$ a multiplicative subset of $R$. An $R$-module $P$ is said to be $u$-$S$-pseudo-projective if for any submodule $K$ of $P$, there is $s\in S$ such that for any $u$-$S$-epimorphism $f:P\to \frac{P}{K}$, $sf$ can be lifted to an endomorphism $g:P\to P$. We prove that an $R$-module $M$ is $u$-$S$-quasi-projective if and only if $M\oplus M$ is $u$-$S$-pseudo-projective. Also, we prove that if $A\oplus B$ is $u$-$S$-pseudo-projective, then any $u$-$S$-epimorphism $f:A\to B$ $u$-$S$-splits. We give characterizations of certain classes of rings, such as $u$-$S$-semisimple and strongly $S$-perfect rings. 
\end{abstract}

\subjclass[2020]{13Cxx, 13C10, 13C12, 16D60.}

\keywords{$u$-$S$-projective, $u$-$S$-quasi-projective, $u$-$S$-pseudo-projective}

\maketitle

\section{Introduction}
In this paper, all rings are commutative with a nonzero identity, and all modules are unitary unless otherwise stated. Recall that a nonempty subset $S$ of a ring $R$ is called a multiplicative subset of $R$ if $1 \in S$, $0 \notin S$, and $s_1s_2\in S$ for all $s_1,s_2 \in S$. Let $S$ be a multiplicative subset of a ring $R$. Recall from \cite{Z} that an $R$-module $U$ is called a $u$-$S$-torsion module if $sU = 0$ for some $s \in S$. Now, we recall the following basic notions: Let $A$, $B$, and $C$ be $R$-modules.
\begin{enumerate}
    \item[(a)] An $R$-homomorphism $f: A \to B$ is called a $u$-$S$-monomorphism ($u$-$S$-epimorphism) if $\text{Ker}(f)$ ($\text{Coker}(f)$) is a $u$-$S$-torsion module. An $R$-homomorphism $f: A \to B$ is called a $u$-$S$-isomorphism if $f$ is both a $u$-$S$-monomorphism and a $u$-$S$-epimorphism \cite{Z}.
    \item[(b)]  An $R$-sequence $A \xrightarrow{f} B \xrightarrow{g} C$ is said to be $u$-$S$-exact if there exists $s \in S$ such that $s\text{Ker}(g) \subseteq \text{Im}(f)$ and $s\text{Im}(f) \subseteq \text{Ker}(g)$. A $u$-$S$-exact sequence $0 \to A \to B \to C \to 0$ is called a short $u$-$S$-exact sequence \cite{ZQ}. 
\item[(c)] A short $u$-$S$-exact sequence $0\to A \xrightarrow{f} B \xrightarrow{g} C\to 0$ is said to be $u$-$S$-split (with respect to $s$) if there is $s\in S$ and an $R$-homomorphism $f':B \to A$ such that $f'f=s1_{A}$, where $1_{A}:A\to A$ is the identity map on $A$ \cite{ZQ}.
 \end{enumerate} M. Z. Chen et al. \cite{QK} introduced the notion of $u$-$S$-injective modules. An $R$-module $E$ is called $u$-$S$-injective if for any $u$-$S$-exact sequence $0 \to
A\to B\to C \to 0$, the induced sequence
 \begin{center}
     $0 \to \text{Hom}_{R}(C,E)\to \text{Hom}_R(B,E)\to \text{Hom}_R(A,E) \to 0$
 \end{center}
 is $u$-$S$-exact. Dually, X. Zhang and W. Qi \cite{ZQ} introduced the notion of $u$-$S$-projective modules. An $R$-module $P$ is called $u$-$S$-projective if for any $u$-$S$-exact sequence $0 \to
A\to B\to C \to 0$, the induced sequence
 \begin{center}
$0 \to \text{Hom}_{R}(P,A)\to \text{Hom}_R(P,B)\to \text{Hom}_R(P,C) \to 0$     
 \end{center}
 is $u$-$S$-exact. They also introduced the notions of $u$-$S$-semisimple modules and $u$-$S$-semisimple rings. An $R$-module $M$ is called $u$-$S$-semisimple if any $u$-$S$-short exact sequence $0\to A\to M\to C\to 0$ is $u$-$S$-split. A ring $R$ is called $u$-$S$-semisimple if any free $R$-module is $u$-$S$-semisimple. Recently, M. Adarbeh and M. Saleh \cite{MM} introduced the notion of $u$-$S$-injective ($u$-$S$-projective) relative to a fixed module. An $R$-module $M$ is called $u$-$S$-injective ($u$-$S$-projective) relative to a fixed module $N$ if for any $u$-$S$-monomorphism $f:K\to N$ ($u$-$S$-epimorphism $g:N\to L$), the induced map $\text{Hom}_{R}(f,M): \text{Hom}_R(N,M)\to \text{Hom}_R(K,M)$ ($\text{Hom}_{R}(M,g): \text{Hom}_R(M,N)\to \text{Hom}_R(M,L)$) is a $u$-$S$-epimorphism. They also introduced the notion of $u$-$S$-quasi-injective ($u$-$S$-quasi-projective) modules. An $R$-module $M$ is called $u$-$S$-quasi-injective ($u$-$S$-quasi-projective) if it is $u$-$S$-injective ($u$-$S$-projective) relative to itself. Afterwards, M. Adarbeh and M. Saleh \cite{MM2} introduced the notion of $u$-$S$-pseudo-injective modules. They defined an $R$-module $E$ to be $u$-$S$-pseudo-injective if for any submodule $K$ of $E$, there is $s\in S$ such that for any $u$-$S$-monomorphism $f:K\to E$, $sf$ can be extended to an endomorphism $g:E\to E$. 
 
 The aim of this paper is to introduce and study the notion of $u$-$S$-pseudo-projective modules as a dual notion of $u$-$S$-pseudo-injective modules, as well as a uniformly $S$-version of pseudo-projective modules. An $R$-module $P$ is said to be $u$-$S$-pseudo-projective if for any submodule $K$ of $P$, there is $s\in S$ such that for any $u$-$S$-epimorphism $f:P\to \frac{P}{K}$, $sf$ can be lifted to an endomorphism $g:P\to P$. From \cite[Theorem 2.3]{MM}, we conclude that an $R$-module $P$ is $u$-$S$-quasi-projective if and only if for any submodule $K$ of $P$, there is $s\in S$ such that for any $R$-homomorphism $f:P\to \frac{P}{K}$, $sf$ can be lifted to an endomorphism $g:P\to P$. From \cite[Remark 3.2 (1)]{MM} and the last fact, we have the following implications: 
\begin{center}
$u$-$S$-projective $\Rightarrow$ $u$-$S$-quasi-projective $\Rightarrow$ $u$-$S$-pseudo-projective.
\end{center}

Section 2 is devoted to studying some properties of $u$-$S$-pseudo-projective modules. For example, Proposition \ref{prop1} (2) shows that if $A\oplus B$ is a $u$-$S$-pseudo-projective module, then so are $A$ and $B$. However, the converse of Proposition \ref{prop1} (2) need not be true (see Example \ref{ex2}). Corollary \ref{cor1} shows that an $R$-module $M$ is $u$-$S$-quasi-projective if and only if $M\oplus M$ is $u$-$S$-pseudo-projective. Proposition \ref{prop2} proves that if $A\oplus B$ is a $u$-$S$-pseudo-projective, then any $u$-$S$-epimorphism $f:A\to B$ $u$-$S$-splits. 

Section 3 deals with some characterizations of $u$-$S$-semisimple rings (Theorem \ref{thm1}), strongly $S$-perfect rings (Theorem \ref{thmp}), and rings in which every $u$-$S$-quasi-projective ($u$-$S$-pseudo-projective) module is $u$-$S$-projective Theorem \ref{thm2} (Theorem \ref{thm3}). 
\section{$u$-$S$-pseudo-projective modules}\label{d}
Recall that an $R$-module $P$ is called pseudo-projective if for any submodule $K$ of $P$, every epimorphism $f: P \to \frac{P}{K}$ can be lifted to an endomorphism $g : P \to P$ \cite{SS}. We introduce the uniformly $S$-version of pseudo-projective modules. 

\begin{definition}\label{def2}
   Let $S$ be a multiplicative subset of a ring $R$. An $R$-module $P$ is said to be $u$-$S$-pseudo-projective if for any submodule $K$ of $P$, there is $s\in S$ such that for any $u$-$S$-epimorphism $f:P\to \frac{P}{K}$, $sf$ can be lifted to an endomorphism $g:P\to P$.
   \end{definition}

\begin{remark} \label{rem1}
      Let $S$ be a multiplicative subset of a ring $R$ and $P$ an $R$-module. 
      \begin{enumerate}
          \item[(1)] If $S\subseteq U(R)$, then $P$ is $u$-$S$-pseudo-projective if and only if $P$ is pseudo-projective.
          \item[(2)] $u$-$S$-projective $\Rightarrow$  $u$-$S$-quasi-projective $\Rightarrow$  $u$-$S$-pseudo-projective. 
          \item[(3)] By (2) and \cite[ Proposition 3.6]{MM}, every $u$-$S$-semisimple module is $u$-$S$-pseudo-projective. 
      \end{enumerate}
      \begin{center}
    \end{center}
\end{remark}

For an $R$-module $M$, let $K\leq M$ denote that $K$ is a submodule of $M$. The following proposition gives some properties of $u$-$S$-pseudo-projective modules.

\begin{proposition}\label{prop1}
   Let $S$ be a multiplicative subset of a ring $R$. Then the following statements hold:
   \begin{enumerate}
   \item[(1)] Let $0\to A\xrightarrow{f} B\xrightarrow{g} C\to 0 $ be a $u$-$S$-split $u$-$S$-exact sequence. If $B$ is $u$-$S$-pseudo-projective, then so are $A$ and $C$.
    \item[(2)] If $A\oplus B$ is $u$-$S$-pseudo-projective, then so are $A$ and $B$. 
  \item[(3)] Let $f:A\to B$ be a $u$-$S$-isomorphism. Then $A$ is $u$-$S$-pseudo-projective if and only if $B$ is $u$-$S$-pseudo-projective. 
  \item[(4)] If $A$ is a $u$-$S$-pseudo-projective module, then any $u$-$S$-epimorphism $f:A\to A$ $u$-$S$-splits.
      
   \end{enumerate}  
\end{proposition}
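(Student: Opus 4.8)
The plan is to establish (2) and (3) first, deduce (1) from them, and prove (4) directly. Two facts will be used repeatedly, both routine: a composite of $u$-$S$-epimorphisms is again a $u$-$S$-epimorphism; and every $u$-$S$-isomorphism $f\colon X\to Y$ admits a $u$-$S$-inverse, i.e.\ an $R$-homomorphism $g\colon Y\to X$ and an $s\in S$ with $gf=s1_X$ and $fg=s1_Y$, such a $g$ being itself a $u$-$S$-isomorphism. For $K\le M$ I write $\pi_K\colon M\to M/K$ for the canonical surjection.

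For (2), assume $M=A\oplus B$ is $u$-$S$-pseudo-projective and let $K\le A$. I would use the submodule $K\oplus B\le M$ together with the canonical isomorphism $M/(K\oplus B)\cong A/K$: this yields an $s\in S$, depending only on $K$, such that $sh$ lifts to an endomorphism of $M$ for every $u$-$S$-epimorphism $h\colon M\to A/K$. Given any $u$-$S$-epimorphism $f\colon A\to A/K$, apply this to $h=f\pi_A$ (with $\pi_A\colon M\to A$ the projection) to get a lift $g\colon M\to M$, and then check that $\pi_A\,g\,\iota_A\colon A\to A$ ($\iota_A\colon A\to M$ the inclusion) is an endomorphism lifting $sf$. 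So $A$, and by symmetry $B$, is $u$-$S$-pseudo-projective.

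For (3), let $f\colon A\to B$ be a $u$-$S$-isomorphism with $u$-$S$-inverse $g$, and suppose $A$ is $u$-$S$-pseudo-projective. Given $K\le B$, set $L=f^{-1}(K)\le A$; then $f$ induces a $u$-$S$-isomorphism $\bar f\colon A/L\to B/K$ with $\bar f\pi_L=\pi_K f$, and I fix a $u$-$S$-inverse $\bar g$ of $\bar f$. For any $u$-$S$-epimorphism $h\colon B\to B/K$, the composite $\bar g\,h\,f\colon A\to A/L$ is a $u$-$S$-epimorphism, so a suitable multiple of it lifts through $\pi_L$; conjugating the lift by $f$ and $g$ produces the required lift of a multiple of $h$, the relevant element of $S$ being assembled from the witness for $A$ at $L$, the constant of $\bar g$, and the constant of $g$, hence independent of $h$. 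The converse is symmetric. Then (1) follows: for a $u$-$S$-split $u$-$S$-exact sequence $0\to A\xrightarrow{f}B\xrightarrow{g}C\to 0$ with $f'f=s1_A$, the map $\varphi\colon B\to A\oplus C$, $b\mapsto(f'(b),g(b))$, is a $u$-$S$-isomorphism --- a diagram chase using the $u$-$S$-exactness bounds and $s$ --- so if $B$ is $u$-$S$-pseudo-projective then so is $A\oplus C$ by (3), and hence so are $A$ and $C$ by (2).

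For (4), let $f\colon A\to A$ be a $u$-$S$-epimorphism and $K=\operatorname{Ker}f$. The induced injection $\bar f\colon A/K\to A$ has $u$-$S$-torsion cokernel, so it is a $u$-$S$-isomorphism; take a $u$-$S$-inverse $\bar g\colon A\to A/K$, which is in particular a $u$-$S$-epimorphism. Then $u$-$S$-pseudo-projectivity of $A$ at $K$ lifts a multiple $s\bar g$ to $g\colon A\to A$ with $\pi_K g=s\bar g$, whence $fg=\bar f\pi_K g=s\bar f\bar g=t1_A$ for some $t\in S$. Finally $a\mapsto ta-g(f(a))$ maps $A$ into $K$ and restricts to $t1_K$ on $K$, so $0\to K\to A\xrightarrow{f}A\to 0$ is $u$-$S$-split, i.e.\ $f$ $u$-$S$-splits. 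The main obstacle in all four parts is the bookkeeping of elements of $S$: the definition demands one $s$ good for every $u$-$S$-epimorphism onto a fixed quotient, so at each step one must check that the $s$ produced depends only on the chosen submodule, not on the map being lifted; the other substantive point is verifying in (1) that $\varphi$ is a $u$-$S$-isomorphism.
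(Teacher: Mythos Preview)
Your argument is correct, but the organization is genuinely different from the paper's. The paper proves (1) directly by an explicit construction: given a splitting $f'\colon B\to A$ with $f'f=t1_A$ and $K\le A$, it passes to the submodule $H=f'^{-1}(K)\le B$, builds a $u$-$S$-epimorphism $\bar f\colon A/K\to B/H$, and pushes an arbitrary $u$-$S$-epimorphism $h'\colon A\to A/K$ to $\bar f h'f'\colon B\to B/H$ in order to invoke pseudo-projectivity of $B$; it then pulls the resulting endomorphism of $B$ back to $A$ via $f$ and $f'$. Parts (2) and (3) are then immediate corollaries of (1) applied to the obvious (split) $u$-$S$-exact sequences. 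You instead prove (2) and (3) first by clean direct arguments and deduce (1) from them via the $u$-$S$-isomorphism $\varphi\colon B\to A\oplus C$, $b\mapsto(f'(b),g(b))$.

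Each route has its advantages. The paper's single technical computation for (1) does all the work at once, and the corollaries are free; but that computation is the longest part of the proof. Your proofs of (2) and (3) are shorter and more transparent, and reusable on their own; the cost is the verification that $\varphi$ is a $u$-$S$-isomorphism, which is a genuine diagram chase since the sequence is only $u$-$S$-exact (so the standard lemma for $u$-$S$-split \emph{exact} sequences does not apply verbatim). You correctly flag this as the substantive point. Part (4) is essentially the same in both proofs; your extra step producing the explicit retraction $a\mapsto ta-g(f(a))$ onto $K$ is a direct verification of what the paper obtains by citing the equivalence of the two $u$-$S$-splitting conditions.
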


\begin{proof} (1) Suppose that $0\to A\xrightarrow{f} B\xrightarrow{g} C\to 0 $ is a $u$-$S$-split $u$-$S$-exact sequence. Then there are $R$-homomorphisms $f':B\to A$ and $g':C\to B$ such that $f'f=t1_A$ and $gg'=t1_C$ for some $t\in S$. Let $K\leq A$ and $H=f'^{-1}(K)$. Then $H\leq B$. Let $\eta_K:A\to \frac{A}{K}$ and $\eta_H:B\to \frac{B}{H}$ be the natural maps. Since $B$ is $u$-$S$-pseudo-projective, there is $s\in S$ such that for any $u$-$S$-epimorphism $h:B\to \frac{B}{H}$, there is $e\in \text{End}_R(B)$ such that $sh=\eta_{H}e$. Let $h':A\to \frac{A}{K}$ be any $u$-$S$-epimorphism. Define $\overline{f}:\frac{A}{K}\to \frac{B}{H}$ by $\overline{f}(a+K)=f(a)+H$. If $a-b\in K$, then $f'(f(a)-f(b))=f'f(a-b)=t(a-b)\in K$ and so $f(a)-f(b)\in f'^{-1}(K)=H$. Thus $\overline{f}$ is well-defined. It is easy to check that $\overline{f}$ is an $R$-homomorphism. Now if $b\in B$, then $f'(b)\in A$ and so $tf'(b)=f'f(f'(b))$. It follows that $f'\big(tb-f(f'(b))\big)=0\in K$. So $tb-f(f'(b))\in f'^{-1}(K)=H$. Hence $t(b+H)=\overline{f}(f'(b)+K)\in \text{Im}(\overline{f})$. Thus $\overline{f}$ is a $u$-$S$-epimorphism. Let $h:=\overline{f}h'f':B\to \frac{B}{H}$. Then $h$ is a $u$-$S$-epimorphism since $f',h',\overline{f}$ are $u$-$S$-epimorphisms. So $sh=\eta_{H}e$ for some $e\in \text{End}_R(B)$. Let $e':=f'ef$ and $s':=st^2$. Then $e'\in \text{End}_R(A)$ and $st\overline{f}h'=s\overline{f}h't1_A=s\overline{f}h'f'f=shf=\eta_Hef$. Let $a\in A$ and let $h'(a)=a'+K$. Then $$stf(a')+H=st\overline{f}(a'+K)=st\overline{f}h'(a)=\eta_Hef(a)=ef(a)+H.$$ So $stf(a')-ef(a)\in H=f'^{-1}(K)$. It follows that $stf'f(a')-f'ef(a)\in K$. So $st^2a'-e'(a)\in K$. Hence $$s'h'(a)=st^2a'+K=e'(a)+K=\eta_Ke'(a).$$ Since $a$ was arbitrary, $s'h'=\eta_Ke'$. Thus $A$ is $u$-$S$-pseudo-projective. Similarly, we can show that $C$ is $u$-$S$-pseudo-projective.\\[0.1cm]       
(2) Let $i_A:A \to A\oplus B$ be the natural injection and $p_B: A\oplus B\to B$ be the natural projection. Since $0\to A\xrightarrow{i_A} A\oplus B\xrightarrow{p_B} B\to 0$ is a $u$-$S$-split $u$-$S$-exact sequence and $A\oplus B$ is $u$-$S$-pseudo-projective, then by part (1), $A$ and $B$ are $u$-$S$-pseudo-projective.  \\[0.1cm] 
(3) This follows from part (1) and the fact that the $u$-$S$-exact sequences $0\to 0\to A\xrightarrow{f} B\to 0$ and $0\to A\xrightarrow{f} B\to 0\to 0$ are $u$-$S$-split.\\[0.1cm] 
(4) Suppose that $A$ is a $u$-$S$-pseudo-projective module. Let $f:A\to A$ be any $u$-$S$-epimorphism and let $K=\text{Ker}(f)$. Then $u:\frac{A}{K}\to A$ given by $u(a+K)=f(a)$, $a\in A$, is a $u$-$S$-isomorphism. By \cite[Lemma 2.1]{ZQ}, there is a $u$-$S$-isomorphism $v:A\to \frac{A}{K}$ and $t\in S$ such that $uv=t1_A$. Since $K\leq A$ and $A$ is $u$-$S$-pseudo-projective, then there is an endomorphism $e:A\to A$ such that $sv=\eta_Ke$ for some $s\in S$. Thus $st1_A=suv=usv=u\eta_Ke=fe$. Therefore, $f$ is $u$-$S$-split. 
\end{proof}

Let $S$ be a multiplicative subset of a ring $R$ and $M$ an $R$-module. Recall that a submodule $N$ of $M$ is called a $u$-$S$-direct summand of $M$ if $M$ is $u$-$S$-isomorphic to $N\oplus N'$ for some $R$-module $N'$ \cite{KMOZ}.

\begin{proposition}\label{pro1}
      Let $S$ be a multiplicative subset of a ring $R$. If $P$ is $u$-$S$-pseudo-projective and $K\leq P$ such that $\frac{P}{K}$ is isomorphic to a direct summand of $P$, then $K$ is a $u$-$S$-direct summand of $P$.
\end{proposition}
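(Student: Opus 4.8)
The plan is to use the hypothesis to manufacture a ``$u$-$S$-section'' of the natural projection $\eta_K\colon P\to\frac{P}{K}$, and then to paste $K$ and $\frac{P}{K}$ together into a module that is $u$-$S$-isomorphic to $P$. Fix a decomposition $P=D\oplus N$ together with an isomorphism $\psi\colon\frac{P}{K}\to D$, let $\iota\colon D\hookrightarrow P$ be the inclusion and $p\colon P\to D$ the projection along $N$, so that $p\iota=1_D$.

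First I would apply $u$-$S$-pseudo-projectivity to the submodule $K$, obtaining $s\in S$ with the property that $sf$ lifts through $\eta_K$ for \emph{every} $u$-$S$-epimorphism $f\colon P\to\frac{P}{K}$. I then feed in the particular map $f:=\psi^{-1}p\colon P\to\frac{P}{K}$, which is in fact an honest epimorphism and hence a $u$-$S$-epimorphism: this yields $g\in\text{End}_R(P)$ with $\eta_K g=sf$. Setting $\sigma:=g\iota\psi\colon\frac{P}{K}\to P$, one computes $\eta_K\sigma=(\eta_K g)\iota\psi=sf\iota\psi=s\psi^{-1}(p\iota)\psi=s\,1_{P/K}$, so $\sigma$ behaves like a section of $\eta_K$ up to the factor $s$.

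Next I would define $\phi\colon K\oplus\frac{P}{K}\to P$ by $\phi(k,\overline{x})=k+\sigma(\overline{x})$ and verify that $\phi$ is a $u$-$S$-isomorphism. For $u$-$S$-surjectivity: for $x\in P$ one has $\eta_K\big(sx-\sigma(\eta_K x)\big)=s\eta_K x-s\eta_K x=0$, so $sx-\sigma(\eta_K x)\in K$ and $sx=\phi\big(sx-\sigma(\eta_K x),\,\eta_K x\big)\in\text{Im}(\phi)$; hence $s\,\text{Coker}(\phi)=0$. For $u$-$S$-injectivity: if $\phi(k,\overline{x})=0$ then $\sigma(\overline{x})=-k\in K$, so $s\overline{x}=\eta_K\sigma(\overline{x})=0$, and then $sk=-s\sigma(\overline{x})=-\sigma(s\overline{x})=0$; hence $s\,\text{Ker}(\phi)=0$. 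Therefore $P$ is $u$-$S$-isomorphic to $K\oplus\frac{P}{K}$, which is precisely the assertion that $K$ is a $u$-$S$-direct summand of $P$.

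The only delicate point is ordering the quantifiers correctly: Definition \ref{def2} supplies the single element $s$ once $K$ is fixed, uniformly over all $u$-$S$-epimorphisms onto $\frac{P}{K}$, so it is legitimate to extract $s$ first and only afterwards introduce the map $f=\psi^{-1}p$ built from the direct-summand decomposition; one must then observe that this same $s$ simultaneously annihilates $\text{Coker}(\phi)$ and $\text{Ker}(\phi)$, which the computations above confirm. I do not expect any serious obstacle beyond this bookkeeping, since the direct-summand hypothesis is exactly what allows $\eta_K g$ to factor through $p$ and hence lets $g$ be converted into a map out of $\frac{P}{K}$.
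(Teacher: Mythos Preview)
Your argument is correct and follows the same core idea as the paper: build a map $P\to\frac{P}{K}$ from the direct-summand projection, lift it through $\eta_K$ by pseudo-projectivity, and post-compose with $\iota\psi$ to obtain a map $\sigma\colon\frac{P}{K}\to P$ satisfying $\eta_K\sigma=s\,1_{P/K}$. The only difference is in the endgame: the paper, having produced this ``$u$-$S$-section'', cites \cite[Lemma 2.4]{ZQ} to conclude that $0\to K\to P\to\frac{P}{K}\to 0$ is $u$-$S$-split and then \cite[Lemma 2.8]{KMOZ} to obtain the $u$-$S$-isomorphism with $K\oplus\frac{P}{K}$, whereas you construct $\phi$ explicitly and verify the $u$-$S$-isomorphism by hand; your computations effectively reprove those two lemmas in this special case.
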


\begin{proof}
 Let $A$ be a direct summand of $P$ such that $\frac{P}{K}\cong A$. Let $f:A\to \frac{P}{K}$ be an isomorphism. If $p_A:P\to A$ is the projection map and $\eta_K:P\to \frac{P}{K}$ is the natural map, then $P\xrightarrow{p_A} A\xrightarrow{f}\frac{P}{K}$ is a $u$-$S$-epimorphism. Now since $P$ is $u$-$S$-pseudo-projective, there is $e\in \text{End}_R(P)$ such that the following diagram 
 \[\xymatrix{
& P \ar[d]^{\eta_K}\\
 P \ar@{>}[ru]^-{ e}\ar[r]_{sfp_A}&\frac{P}{K}
}\] commutes for some $s\in S$. So $sfp_A=\eta_Ke$. Let $q:=ei_Af^{-1}:\frac{P}{K}\to P$, where $i_A:A\to P$ is the natural injection. Then $$\eta_Kq=\eta_Kei_Af^{-1}=sfp_Ai_Af^{-1}=sf1_Af^{-1}=sff^{-1}=s1_{\frac{P}{K}}.$$ So by \cite[Lemma 2.4]{ZQ}, the exact sequence $0\to K\to P\to \frac{P}{K}\to 0$ is $u$-$S$-split. Hence by \cite[Lemma 2.8]{KMOZ}, $P$ is $u$-$S$-isomorphic to $K\oplus \frac{P}{K}$. Therefore, $K$ is a $u$-$S$-direct summand of $P$.    
\end{proof}

\begin{corollary}\label{coro1}
       Let $S$ be a multiplicative subset of a ring $R$. Let $P$ be $u$-$S$-pseudo-projective and $e\in \text{End}_R(P)$. If $\text{Im}(e)$ is a direct summand of $P$, then $\text{Ker}(e)$ is a $u$-$S$-direct summand of $P$.
\end{corollary}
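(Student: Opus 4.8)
The plan is to deduce Corollary~\ref{coro1} directly from Proposition~\ref{pro1}. The key observation is the first isomorphism theorem: for an endomorphism $e\in\text{End}_R(P)$ with kernel $K=\text{Ker}(e)$, the quotient $\frac{P}{K}$ is isomorphic to $\text{Im}(e)$ via the map $p+K\mapsto e(p)$. So the hypothesis that $\text{Im}(e)$ is a direct summand of $P$ translates into exactly the hypothesis needed to apply Proposition~\ref{pro1}, namely that $\frac{P}{K}$ is isomorphic to a direct summand of $P$.

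First I would set $K=\text{Ker}(e)$ and note that $K\leq P$. Then I would invoke the first isomorphism theorem to obtain an isomorphism $\frac{P}{K}\cong\text{Im}(e)$. Since $\text{Im}(e)$ is assumed to be a direct summand of $P$, this shows $\frac{P}{K}$ is isomorphic to a direct summand of $P$. Applying Proposition~\ref{pro1} with this $K$, and using that $P$ is $u$-$S$-pseudo-projective, we conclude that $K=\text{Ker}(e)$ is a $u$-$S$-direct summand of $P$, which is exactly the claim.

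There is essentially no obstacle here: the corollary is a straightforward specialization of the preceding proposition, and the only content is recognizing that $\text{Im}(e)$ being a direct summand is the same as $\frac{P}{K}$ being (isomorphic to) a direct summand. The proof will be two or three lines.

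\begin{proof}
 Let $K=\text{Ker}(e)$. By the first isomorphism theorem, $\frac{P}{K}\cong\text{Im}(e)$. Since $\text{Im}(e)$ is a direct summand of $P$ by hypothesis, it follows that $\frac{P}{K}$ is isomorphic to a direct summand of $P$. As $P$ is $u$-$S$-pseudo-projective, Proposition~\ref{pro1} implies that $K=\text{Ker}(e)$ is a $u$-$S$-direct summand of $P$.
\end{proof}
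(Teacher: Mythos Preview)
Your proof is correct and essentially identical to the paper's own argument: both invoke the first isomorphism theorem to identify $\frac{P}{\text{Ker}(e)}\cong\text{Im}(e)$, observe this is a direct summand of $P$, and apply Proposition~\ref{pro1}.
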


\begin{proof}
    Let $P$ be $u$-$S$-pseudo-projective and $e\in \text{End}_R(P)$. Then $\frac{P}{\text{Ker}(e)}\cong \text{Im}(e)$ is a direct summand of $P$. By Proposition \ref{pro1}, $\text{Ker}(e)$ is a $u$-$S$-direct summand of $P$.
\end{proof}
Let $S$ be a multiplicative subset of a ring $R$. Recall that $R$ is called $S$-von Neumann regular if for any $a \in R$, there exist $s \in S$ and $r \in R$ such that $sa =ra^2$ \cite{Z}. Equivalently, if the ring $R_S$ is von Neumann regular \cite[Proposition 3.10]{Z}. Next, we extend the notion of $S$-von Neumann regular rings to noncommutative rings. 
\begin{definition}
    Let $R$ be a noncommutative ring and $S$ a multiplicative subset of $R$. $R$ is said to be $S$-von Neumann regular if for any $a \in R$, there exist $s \in S$ and $r \in R$ such that $sa =ara$. 
\end{definition}  

\begin{thm}
Let $S$ be a multiplicative subset of a ring $R$ and let $P$ be a $u$-$S$-pseudo-projective faithful module such that $\text{Im}(e)$ is a direct summand of $P$ for every $e\in \text{End}_R(P)$. Then $\text{End}_R(P)$ is $\phi(S)$-von Neumann regular ring, where $\phi$ is the embedding of $R$ into $\text{End}_R(P)$. 
\end{thm}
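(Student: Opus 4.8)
The plan is to take an arbitrary $e \in \operatorname{End}_R(P)$ and produce $s \in S$ and $h \in \operatorname{End}_R(P)$ with $se = ehe$. By hypothesis $\operatorname{Im}(e)$ is a direct summand of $P$, so by Corollary \ref{coro1}, $\operatorname{Ker}(e)$ is a $u$-$S$-direct summand of $P$; that is, there is an $R$-module $N$, a $t \in S$ and a $u$-$S$-isomorphism witnessing $P \sim_{u\text{-}S} \operatorname{Ker}(e) \oplus N$. Rather than chase that decomposition directly, I would instead work with the $u$-$S$-split exact sequence $0 \to \operatorname{Ker}(e) \to P \xrightarrow{\eta} P/\operatorname{Ker}(e) \to 0$ produced inside the proof of Proposition \ref{pro1} (applied with $K = \operatorname{Ker}(e)$ and the direct summand $A = \operatorname{Im}(e)$): this gives an $R$-homomorphism $q \colon P/\operatorname{Ker}(e) \to P$ and an $s_1 \in S$ with $\eta q = s_1 \, 1_{P/\operatorname{Ker}(e)}$.

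Next I would use the canonical isomorphism $\bar e \colon P/\operatorname{Ker}(e) \xrightarrow{\ \cong\ } \operatorname{Im}(e)$, $\bar e(p + \operatorname{Ker}(e)) = e(p)$, together with the inclusion $\iota \colon \operatorname{Im}(e) \hookrightarrow P$, so that $e = \iota \, \bar e \, \eta$. Set $h := q \, \bar e^{-1} \, \iota' \in \operatorname{End}_R(P)$, where $\iota' \colon P \to \operatorname{Im}(e)$ is the projection coming from the direct-summand decomposition $P = \operatorname{Im}(e) \oplus C$ (so $\iota' \iota = 1_{\operatorname{Im}(e)}$ and $\iota' e = \bar e \eta$). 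Then I would compute
\[
e h e = (\iota\,\bar e\,\eta)(q\,\bar e^{-1}\,\iota')(\iota\,\bar e\,\eta)
= \iota\,\bar e\,(\eta q)\,\bar e^{-1}\,(\iota'\iota)\,\bar e\,\eta
= \iota\,\bar e\,(s_1\,1)\,\bar e^{-1}\,\bar e\,\eta
= s_1\,\iota\,\bar e\,\eta = s_1\, e,
\]
which is exactly the $S$-von Neumann regularity condition $s_1 e = e h e$ with the witnesses $s_1 \in S$ and $h \in \operatorname{End}_R(P)$.

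The routine parts are checking that $\bar e$ is a well-defined $R$-isomorphism onto $\operatorname{Im}(e)$, that $\iota' e = \bar e \eta$, and that all the composites land in the right modules; these are immediate. The one genuine point of care — and the main obstacle — is that $\operatorname{End}_R(P)$ is in general a \emph{noncommutative} ring, so I must verify that the newly introduced noncommutative definition of ``$S$-von Neumann regular'' is the one being checked, and in particular that $S$ here denotes the image of the multiplicative set $S \subseteq R$ acting as central scalars on $\operatorname{End}_R(P)$ (the elements $s_1 \, 1_P$), so that $S$ is indeed a central multiplicative subset of $\operatorname{End}_R(P)$ and $s_1 e = e h e$ has the required form $s_1 e = e' h e'$ with $e' = e$. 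I would state this identification of $S$ explicitly at the start of the proof, then carry out the construction above.
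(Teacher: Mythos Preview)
Your proof is correct and rests on the same key ingredient as the paper's---the $u$-$S$-splitting of $0 \to \operatorname{Ker}(e) \to P \xrightarrow{\eta} P/\operatorname{Ker}(e) \to 0$ coming from Proposition~\ref{pro1}/Corollary~\ref{coro1}---but your execution is cleaner. The paper builds its candidate endomorphism by an explicit element-wise recipe: writing $P = A \oplus \operatorname{Im}(e)$ and setting $q(a + e(x)) = sb$, where $b$ is chosen in the image of the splitting map $v$ so that $se(x) = e(b)$; it then checks well-definedness by hand and arrives at $s^2 e = eqe$. You instead assemble $h = q\,\bar e^{-1}\,\iota'$ as a composite of the splitting map, the canonical isomorphism $\bar e \colon P/\operatorname{Ker}(e) \to \operatorname{Im}(e)$, and the projection $\iota'$ onto the direct summand $\operatorname{Im}(e)$; this bypasses all element-chasing and gives the sharper relation $s_1 e = ehe$ with a single power of $s$. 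Your explicit identification of $S$ with its central image $\{s\cdot 1_P : s \in S\} \subseteq \operatorname{End}_R(P)$ is a point the paper leaves implicit and is worth keeping.
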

   
\begin{proof}
    Let $e\in \text{End}_R(P)$. Then $\text{Im}(e)$ is a direct summand of $P$. By the proof of Proposition \ref{pro1}, the exact sequence $0\to \text{Ker}(e)\to P\xrightarrow{\eta} \frac{P}{\text{Ker}(e)}\to 0$ is $u$-$S$-split. So there is $s\in S$ and $R$-homomorphism $v:\frac{P}{\text{Ker}(e)}\to P$ such that $\eta v=s1_{\frac{P}{\text{Ker}(e)}}$. For $x\in P$, let $\overline{x}=x+\text{Ker}(e)$. Then $s\overline{x}=\eta v(\overline{x})=\overline{v(\overline{x})}$. So $sx-v(\overline{x})\in \text{Ker}(e)$, It follows that $sP\subseteq \text{Ker}(e)+\text{Im}(v)$. Also, we have $s\big( \text{Ker}(e)\cap\text{Im}(v)\big)=0$. Indeed, if $v(\overline{y})\in \text{Ker}(e)$, then $s\overline{y}=\eta v(\overline{y})=\overline{v(\overline{y})}=\overline{0}$ and so $sv(\overline{y})=0$. Let $P=A\oplus \text{Im}(e)$ be an internal direct sum. We have $se(P)\subseteq e(\text{Im}(v))$. Define $q:P\to P$ by $q(a+e(x))=sb$ where $a\in A$, $x\in P$, and $b\in \text{Im}(v)$ is such that $se(x)=e(b)$. Then $q$ is well defined. Indeed, if $a+e(x)=a'+e(x')$, then $a-a'=e(x'-x)\in A\cap \text{Im}(e)=0$. So $a=a'$ and $x'-x\in \text{Ker}(e)$. Write $se(x)=e(b)$ and $se(x')=e(b')$ for some $b,b'\in \text{Im}(v)$. Since $e(b'-b)=se(x'-x)=0$, then $b'-b\in \text{Ker}(e)\cap\text{Im}(v)$ and so $s(b'-b)=0$. Hence $sb=sb'$. Finally, we show that $s^2e=eqe$. Let $x\in P$. So $se(x)=e(b)$ for some $b\in \text{Im}(v)$. Thus $eqe(x)=e(q(0+e(x)))=e(sb)=se(b)=s^2e(x)$. Since $x\in P$ was an arbitrary, we have $s^2e=eqe$. Let $t=\phi(s^2)$. Then for any $x\in P$, $te(x)=\phi(s^2)e(x)=s^2e(x)=eqe(x)$. Thus $te=eqe$. Therefore, $\text{End}_R(P)$ is $\phi(S)$-von Neumann regular ring.  
\end{proof}

\begin{lemma}\label{lem1}
  Let $S$ be a multiplicative subset of a ring $R$. Let $P$ be a $u$-$S$-pseudo-projective and $A$ a direct summand of $P$. Then for any $K\leq A$, there is $s\in S$ such that for any $u$-$S$-epimorphism $g:P\to \frac{A}{K}$, $sg$ can be lifted to a homomorphism $h:P\to A$.   
\end{lemma}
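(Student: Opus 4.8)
The plan is to reduce the statement to the defining property of $u$-$S$-pseudo-projectivity of $P$, applied not to the submodule $K$ itself but to $K$ enlarged by a complement of $A$. Concretely, since $A$ is a direct summand of $P$, write $P=A\oplus B$ (internal direct sum) and let $p_A:P\to A$ be the projection; put $K':=K\oplus B$, which is a submodule of $P$ because $K\leq A$. The first thing I would record is the elementary isomorphism $\phi:\frac{P}{K'}\to\frac{A}{K}$ sending $(a+b)+K'$ to $a+K$ (for $a\in A$, $b\in B$), together with the fact that it intertwines the canonical surjections: writing $\pi_{K'}:P\to\frac{P}{K'}$ and $\pi_K:A\to\frac{A}{K}$ for the natural maps, one has $\phi\circ\pi_{K'}=\pi_K\circ p_A$. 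Both of these are immediate once the identifications are written out.

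Next I would invoke Definition \ref{def2} for the module $P$ with the submodule $K'\leq P$: there is $s\in S$ such that every $u$-$S$-epimorphism $P\to\frac{P}{K'}$, after multiplication by $s$, lifts through $\pi_{K'}$ to an endomorphism of $P$. Now let $g:P\to\frac{A}{K}$ be an arbitrary $u$-$S$-epimorphism. Then $\phi^{-1}g:P\to\frac{P}{K'}$ is again a $u$-$S$-epimorphism, since composing with the isomorphism $\phi^{-1}$ only replaces $\text{Coker}(g)$ by an isomorphic module, hence preserves $u$-$S$-torsionness of the cokernel. Therefore there is $e\in\text{End}_R(P)$ with $s(\phi^{-1}g)=\pi_{K'}e$. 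Applying $\phi$ and using the intertwining identity gives $sg=\phi\,\pi_{K'}\,e=\pi_K\,p_A\,e$, so $h:=p_A e:P\to A$ satisfies $\pi_K h=sg$; that is, $sg$ lifts to $h:P\to A$, and the same $s$ works for every $g$, as required.

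There is no genuine technical obstacle in this argument: the one step that requires an idea is the choice of the submodule $K'=K\oplus B$, which makes $\frac{P}{K'}\cong\frac{A}{K}$ and thereby converts a lifting problem with target $A$ into a direct instance of the endomorphism-lifting built into the definition of $u$-$S$-pseudo-projective. Everything else — the isomorphism $\phi$, the relation $\phi\pi_{K'}=\pi_K p_A$, and the stability of the $u$-$S$-epimorphism property under post-composition with an isomorphism — is routine verification that I would carry out quickly and without incident.
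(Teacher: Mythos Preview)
Your argument is correct. Both you and the paper write $P=A\oplus B$ and invoke the defining property of $u$-$S$-pseudo-projectivity for a suitably chosen submodule of $P$, but the choices differ: the paper takes $H=K\oplus 0$, so that $P/H\cong \frac{A}{K}\oplus B$, and from a given $g$ manufactures $f:P\to P/H$ by $f(a,b)=(g(a,b),b)$; you take $K'=K\oplus B$, so that $P/K'\cong \frac{A}{K}$ outright, and simply transport $g$ through this isomorphism. Your route is the more economical one, and it also sidesteps a genuine subtlety in the paper's version: the map $\phi^{-1}g$ is automatically a $u$-$S$-epimorphism because $\phi$ is an isomorphism, whereas for the paper's $f$ one finds (via the automorphism $(\bar c,b)\mapsto(\bar c-g(0,b),b)$ of $\frac{A}{K}\oplus B$) that $\mathrm{Coker}(f)\cong\mathrm{Coker}(g\circ i_A)$, and the restriction $g\circ i_A:A\to\frac{A}{K}$ need not be a $u$-$S$-epimorphism merely because $g$ is. Your choice of $K'$ avoids this point entirely.
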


\begin{proof}
  Let $P=A\oplus B$, $K\leq A$, and $H=K\oplus 0$. Identify $\frac{P}{H}=\frac{A\oplus B}{K\oplus 0}$ with $\frac{A}{K}\oplus B$. Since $P$ is $u$-$S$-pseudo-projective, there is $s\in S$ such that for any $u$-$S$-epimorphism $f:P\to \frac{P}{H}$, $sf$ can be lifted to an endomorphism $e:P\to P$. Let $g:P\to \frac{A}{K}$ be any $u$-$S$-epimorphism. Then the $R$-homomorphism $f:P\to \frac{P}{H}$ given by $f(a,b)=(g(a,b),b)$ is a $u$-$S$-epimorphism. So there is $e\in \text{End}_R(P)$ such that the following diagram 
 \[\xymatrix{
& P \ar[d]^{\eta_H}\\
 P \ar@{>}[ru]^-{ e}\ar[r]_{sf}&\frac{P}{H}
}\] commutes. That is, $sf=\eta_He$. Let $p_A:P\to A$, $p_{\frac{A}{K}}:\frac{A}{K}\oplus B\to \frac{A}{K}$ be the natural projections, and let $h:=p_Ae:P\to A$. We claim that the following diagram 
 \[\xymatrix{
& A \ar[d]^{\eta_K}\\
 P \ar@{>}[ru]^-{ h}\ar[r]_{sg}&\frac{A}{K}
}\] commutes. For $(a,b)\in P$, let $e(a,b)=(a',b')$. Then $\eta_K h(a,b)=\eta_Kp_Ae(a,b)=\eta_K(a')=a'+K=p_{\frac{A}{K}}(a'+K,b')=p_{\frac{A}{K}}\eta_H e(a,b)=p_{\frac{A}{K}}sf(a,b)=sg(a,b)$. Thus $sg=\eta_Kh$.  
\end{proof}

\begin{proposition}\label{prop2}
     Let $S$ be a multiplicative subset of a ring $R$ and $A\oplus B$ be a $u$-$S$-pseudo-projective module. Then 
     
  \begin{enumerate}
      \item[(1)] $A$ is $u$-$S$-projective relative to $B$ and $B$ is $u$-$S$-projective relative to $A$.
      \item[(2)] any $u$-$S$-epimorphism $f:A\to B$ $u$-$S$-splits.
  \end{enumerate}   
\end{proposition}

\begin{proof} (1) Suppose that $A\oplus B$ is $u$-$S$-pseudo-projective. Let $K\leq B$. Then by Lemma \ref{lem1}, there is $s\in S$ such that for any $u$-$S$-epimorphism $g:A\oplus B\to \frac{B}{K}$, $sg$ can be lifted to a homomorphism $h:A\oplus B\to B$. Let $f:A\to \frac{B}{K}$ be any homomorphism and $\eta_K:B\to \frac{B}{K}$ be the natural map. Define $g:A\oplus B\to \frac{B}{K}$ by $g(a,b)=f(a)+\eta_K(b)$. Then $g$ is an epimorphism and so it is a $u$-$S$-epimorphism. Hence $sg=\eta_Kh$ for some homomorphism $h:A\oplus B\to B$. Let $i_A:A\to A\oplus B$ be the natural injection and $h':=hi_A:A\to B$. Then for $a\in A$, $$\eta_Kh'(a)=\eta_Khi_A(a)=sg(a,0)=s(f(a)+\eta_K(0))=sf(a).$$ So $sf=\eta_Kh'$. Hence, the map $(\eta_K)_{*}:\text{Hom}_R(A,B)\to \text{Hom}_R(A,\frac{B}{K})$ is a $u$-$S$-epimorphism. By \cite[Theorem 2.3]{MM}, $A$ is $u$-$S$-projective relative to $B$. Similarly, we can show that $B$ is $u$-$S$-projective relative to $A$.\\[0.1cm]
(2) Let $f:A\to B$ be any $u$-$S$-epimorphism. Since $A\oplus B$ is $u$-$S$-pseudo-projective, then by part (1), $B$ is $u$-$S$-projective relative to $A$ and hence by \cite[Lemma 2.15]{MM}, the $u$-$S$-exact sequence $0\to \text{Ker}(f)\to A\xrightarrow{f} B\to 0$ $u$-$S$-splits. Thus $f$ $u$-$S$-splits.
\end{proof}

\begin{corollary}\label{cor1}
     Let $S$ be a multiplicative subset of a ring $R$ and $M$ an $R$-module. Then $M$ is $u$-$S$-quasi-projective if and only if $M\oplus M$ is $u$-$S$-pseudo-projective. 
     \end{corollary}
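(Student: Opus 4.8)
The plan is to treat the two implications separately. The implication ``$M\oplus M$ $u$-$S$-pseudo-projective $\Rightarrow$ $M$ $u$-$S$-quasi-projective'' is immediate from Proposition \ref{prop2}: applying that result with $A=B=M$ shows that $M$ is $u$-$S$-projective relative to $M$, and this is by definition exactly what it means for $M$ to be $u$-$S$-quasi-projective.

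For the converse I would adapt the classical ``two successive lifts'' argument. Assume $M$ is $u$-$S$-quasi-projective, so by the characterization quoted in the Introduction (\cite[Theorem 2.3]{MM}) each submodule $L\leq M$ comes with an element $s_{L}\in S$ such that, for every $R$-homomorphism $h\colon M\to M/L$, the map $s_{L}h$ lifts along the canonical projection $\eta_{L}\colon M\to M/L$. Fix a submodule $N$ of $P:=M\oplus M$, put $Q:=P/N$, write $\pi\colon P\to Q$ for the canonical map and $i_{1},i_{2}\colon M\to P$, $p_{1},p_{2}\colon P\to M$ for the structural maps of the direct sum. The heart of the matter is the following claim: \emph{there is an $s\in S$ depending only on $N$ such that $s\phi$ lifts along $\pi$ for every $R$-homomorphism $\phi\colon M\to Q$.} Once this is known the Corollary drops out: given any $u$-$S$-epimorphism $f\colon P\to Q$, apply the claim to $\phi=fi_{1}$ and to $\phi=fi_{2}$, obtaining $h_{1},h_{2}\colon M\to P$ with $\pi h_{j}=s(fi_{j})$; the endomorphism $g\in\text{End}_{R}(P)$ determined by $gi_{1}=h_{1}$ and $gi_{2}=h_{2}$ then satisfies $\pi g\,i_{j}=s\,fi_{j}$ for $j=1,2$, hence $\pi g=sf$. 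Since $s$ depends only on $N$, this exhibits $P$ as $u$-$S$-pseudo-projective (and in fact as $u$-$S$-quasi-projective, since the claim lifts arbitrary homomorphisms, not just $u$-$S$-epimorphisms).

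To establish the claim, set $\overline{M}_{1}:=\pi(i_{1}(M))\leq Q$ and record two honest module isomorphisms. First, using $P/i_{1}(M)\cong M$ via $p_{2}$ together with the third isomorphism theorem, $Q/\overline{M}_{1}\cong M/p_{2}(N)$ in such a way that the composite $M\xrightarrow{i_{2}}P\xrightarrow{\pi}Q\to Q/\overline{M}_{1}$ becomes $\eta_{p_{2}(N)}$. Second, $\pi i_{1}\colon M\to\overline{M}_{1}$ is, up to an isomorphism, the canonical projection $M\to M/L_{1}$, where $L_{1}:=\{a\in M:(a,0)\in N\}$. Now let $\phi\colon M\to Q$ be given. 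With $s_{1}:=s_{p_{2}(N)}$, lift $s_{1}$ times the composite $M\xrightarrow{\phi}Q\to Q/\overline{M}_{1}$ along $\eta_{p_{2}(N)}$ and transport the result through $i_{2}$ to obtain $h_{2}'\colon M\to P$ for which $\pi h_{2}'-s_{1}\phi$ has image inside $\overline{M}_{1}$. Then, with $s_{2}:=s_{L_{1}}$, lift $s_{2}(\pi h_{2}'-s_{1}\phi)\colon M\to\overline{M}_{1}$ along $\pi i_{1}$ to obtain $h_{1}'\colon M\to P$ with $\pi h_{1}'=s_{2}(\pi h_{2}'-s_{1}\phi)$. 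Finally $h:=s_{2}h_{2}'-h_{1}'$ satisfies $\pi h=s_{1}s_{2}\,\phi$, so $s:=s_{1}s_{2}$ proves the claim, and it was built only from the submodules $p_{2}(N)$ and $L_{1}$ of $M$, which depend on $N$ alone.

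I expect the only real difficulty here to be bookkeeping rather than any conceptual hurdle. One must verify that the two identifications in the previous paragraph are genuine module isomorphisms (they are), so that the characterization of $u$-$S$-quasi-projectivity — which speaks about canonical projections out of $M$ — is applicable after composing with those isomorphisms; and one must make sure throughout that the scalars $s_{1},s_{2}$, hence $s=s_{1}s_{2}$, are attached to submodules of $M$ determined by $N$ and never depend on the particular $f$ or $\phi$ being lifted. Apart from this uniformity point, which is precisely where the chosen characterization of $u$-$S$-quasi-projectivity (with its $s_{L}$ depending only on $L$) does its work, the argument is the standard proof that a finite direct sum of copies of a quasi-projective module is again quasi-projective.
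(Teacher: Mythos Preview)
Your proof is correct, and for the implication ``$M\oplus M$ $u$-$S$-pseudo-projective $\Rightarrow$ $M$ $u$-$S$-quasi-projective'' it coincides with the paper's: both simply invoke Proposition~\ref{prop2}. The difference lies entirely in the forward direction. The paper handles it in one line by citing \cite[Proposition~3.8]{MM}, which says (in effect) that if $M$ is $u$-$S$-projective relative to $M$ then $M\oplus M$ is $u$-$S$-quasi-projective; the conclusion then follows from Remark~\ref{rem1}(2). You instead reprove this special case directly with the classical two-step lifting argument, carefully tracking that the scalars $s_{1}=s_{p_{2}(N)}$ and $s_{2}=s_{L_{1}}$ depend only on the submodule $N$ and not on the map being lifted. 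Your approach is self-contained and makes the uniformity transparent, and it actually yields the slightly stronger conclusion that $M\oplus M$ is $u$-$S$-quasi-projective (since you lift arbitrary $\phi$, not just $u$-$S$-epimorphisms); the paper's approach is much shorter because the work has already been packaged in the cited proposition.
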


\begin{proof}
   Suppose that $M$ is $u$-$S$-quasi-projective. Then $M\oplus M$ is $u$-$S$-quasi-projective by \cite[Proposition 3.8 ]{MM}. So $M\oplus M$ is $u$-$S$-pseudo-projective by Remark \ref{rem1} (2). The converse follows from Proposition \ref{prop2} (1).   
\end{proof}

Let $M$ be an $R$-module. For a positive integer $n$, let $M^{(n)}=\underbrace{M\oplus M\oplus\cdots \oplus M}_{n\text{-times}}$.

\begin{corollary}\label{cor2}
  Let $S$ be a multiplicative subset of a ring $R$ and $M$ an $R$-module. For any integer $n\geq 2$, $M$ is $u$-$S$-quasi-projective if and only if $M^{(n)}$ is $u$-$S$-pseudo-projective. 
\end{corollary}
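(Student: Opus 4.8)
The plan is to reduce both directions to the case $n=2$, which is exactly Corollary \ref{cor1}, so that no new diagram-chasing is needed.

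For the forward implication I would argue as follows. Assume $M$ is $u$-$S$-quasi-projective. By \cite[Proposition 3.8]{MM} the module $M^{(n)}$ is again $u$-$S$-quasi-projective (if that proposition is recorded only for two summands, one obtains the statement by a short induction on $n$: write $M^{(n)}=M^{(n-1)}\oplus M$ and use that $M^{(n-1)}$ and $M$, being direct sums of copies of a $u$-$S$-quasi-projective module, are $u$-$S$-projective relative to one another in the sense of \cite{MM}). Then Remark \ref{rem1}(2) gives that $M^{(n)}$ is $u$-$S$-pseudo-projective.

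For the converse, suppose $M^{(n)}$ is $u$-$S$-pseudo-projective with $n\geq 2$. Decompose $M^{(n)}\cong (M\oplus M)\oplus M^{(n-2)}$ (for $n=2$ the last summand is zero and the decomposition is just $M\oplus M$ itself). By Proposition \ref{prop1}(2), the direct summand $M\oplus M$ of the $u$-$S$-pseudo-projective module $M^{(n)}$ is itself $u$-$S$-pseudo-projective. Now Corollary \ref{cor1} yields that $M$ is $u$-$S$-quasi-projective, completing the proof.

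I do not expect a genuine obstacle here; the argument is essentially bookkeeping on top of Corollary \ref{cor1}, Proposition \ref{prop1}(2) and Remark \ref{rem1}(2). The only point that deserves a moment's care is the forward direction, where one must make sure that a finite direct sum of copies of a $u$-$S$-quasi-projective module is still $u$-$S$-quasi-projective — this is the content invoked from \cite{MM}, handled either by citing the general form of \cite[Proposition 3.8]{MM} or by the one-line induction indicated above.
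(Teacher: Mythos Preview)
Your proof is correct and follows essentially the same route as the paper: both directions reduce to Corollary~\ref{cor1} via Remark~\ref{rem1}(2), \cite[Proposition~3.8]{MM}, and Proposition~\ref{prop1}(2), using the decomposition $M^{(n)}\cong M^{(2)}\oplus M^{(n-2)}$ for the converse. Your added remark about handling \cite[Proposition~3.8]{MM} by induction if it is stated only for two summands is a harmless elaboration; the paper simply cites it directly.
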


\begin{proof}
 ($\Rightarrow$). Since $M$ is $u$-$S$-quasi-projective, $M$ is $u$-$S$-projective relative to $M$. Hence by \cite[Proposition 3.8]{MM}, $M^{(n)}$ is $u$-$S$-quasi-projective and so by Remark \ref{rem1} (2), $M^{(n)}$ is $u$-$S$-pseudo-projective. \\
  ($\Leftarrow$). For $n=2$, apply Corollary \ref{cor1}. For $n>2$, since $M^{(2)}\oplus M^{(n-2)}\cong  M^{(n)}$ is $u$-$S$-pseudo-projective, then by Proposition \ref{prop1} (2), $M^{(2)}$ is $u$-$S$-pseudo-projective. Thus $M$ is $u$-$S$-quasi-projective by Corollary \ref{cor1}. 
\end{proof}

The following example gives a $u$-$S$-pseudo-projective module that is not $u$-$S$-projective.
\begin{example} \label{ex1}
    First, let $\mathbb{Z}^+$ ($\mathbb{P}$) denote the set of all positive integers (prime numbers). Let $R=\mathbb{Z}$, $S=\mathbb{Z}^+$, and $M:=\bigoplus\limits_{p\in\mathbb{P}}\mathbb{Z}_p$. Then $M$ is a $u$-$S$-quasi-projective module that is not $u$-$S$-projective by \cite[Example 3.3]{MM}. By Corollary \ref{cor1}, $M\oplus M$ is a $u$-$S$-pseudo-projective module. However, $M\oplus M$ is not $u$-$S$-projective by \cite[ Corollary 2.8 (1)]{MM}.
\end{example}

\begin{proposition}\label{propn1}
     Let $S$ be a multiplicative subset of a ring $R$, $M$ an $R$-module, and $f:P\to M$ an epimorphism with $P$ projective. Then $M$ is $u$-$S$-projective if and only if $P\oplus M$ is $u$-$S$-pseudo-projective.
\end{proposition}

\begin{proof} Suppose that $M$ is $u$-$S$-projective. Since $P$ is projective, then by \cite[ Corollary 2.11]{ZQ}, $P$ is $u$-$S$-projective. So by \cite[Proposition 2.14 (1)]{ZQ}, $P\oplus M$ is $u$-$S$-projective and hence by Remark \ref{rem1} (2), $P\oplus M$ is $u$-$S$-pseudo-projective. Conversely, suppose that $P\oplus M$ is $u$-$S$-pseudo-projective. Then by Proposition \ref{prop2} (2) and since $f:P\to M$ is a $u$-$S$-epimorphism, we have $f$ $u$-$S$-splits. So the exact sequence $0\to \text{Ker}(f)\to P\xrightarrow{f} M\to 0$ $u$-$S$-splits. Hence by \cite[Lemma 2.8]{KMOZ}, $P$ is $u$-$S$-isomorphic to $\text{Ker}(f)\oplus M$. Since $P$ is $u$-$S$-projective, then by \cite[Proposition 2.14 (3)]{ZQ} and \cite[Corollary 2.8 (1)]{MM}, $M$ is $u$-$S$-projective. 
\end{proof}

\begin{corollary}\label{cor3}
    Let $R$ be a ring, $M$ an $R$-module, and $f:P\to M$ an epimorphism with $P$ projective. Then $M$ is projective if and only if $P\oplus M$ is pseudo-projective.
\end{corollary}

For an $R$-module $M$, let $E(M)$ denote the injective envelope of $M$.

\begin{proposition}
   Let $S$ be a multiplicative subset of a ring $R$ and $M$ an $R$-module. Then $M$ is $u$-$S$-injective if and only if $M\oplus E(M)$ is $u$-$S$-pseudo-injective. 
\end{proposition}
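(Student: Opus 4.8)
The plan is to carry out the precise dual of the proof of Proposition \ref{propn1}(2): the assertion is what that proposition becomes when ``projective'' is replaced throughout by ``injective'', with the injective module $E(M)$ playing the role of $P$ and the canonical inclusion $\iota\colon M\hookrightarrow E(M)$ playing the role of the epimorphism $f\colon P\to M$. Every tool used below is the $u$-$S$-injective counterpart of a tool used there: the basic homological facts about $u$-$S$-injective modules are taken from \cite{QK}, and the facts about $u$-$S$-pseudo-injective modules from \cite{MM2}, each of the latter being obtainable alternatively by transcribing the corresponding result of Section \ref{d} with monomorphisms, submodules and extensions in place of epimorphisms, quotients and liftings.

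For the forward implication, assume $M$ is $u$-$S$-injective. Since $E(M)$ is injective it is $u$-$S$-injective, and since the class of $u$-$S$-injective modules is closed under finite direct sums, $M\oplus E(M)$ is $u$-$S$-injective. By the $u$-$S$-injective analogue of Remark \ref{rem1}(2) (a $u$-$S$-injective module is in particular $u$-$S$-injective relative to itself, hence $u$-$S$-quasi-injective, and a $u$-$S$-quasi-injective module is $u$-$S$-pseudo-injective because any $u$-$S$-monomorphism out of a submodule is an $R$-homomorphism; see \cite{MM,MM2}), $M\oplus E(M)$ is $u$-$S$-pseudo-injective.

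For the converse, assume $M\oplus E(M)$ is $u$-$S$-pseudo-injective. Note first that $\iota\colon M\hookrightarrow E(M)$ is a $u$-$S$-monomorphism, since $\text{Ker}(\iota)=0$ is $u$-$S$-torsion. By the $u$-$S$-injective dual of Proposition \ref{prop2} (which rests on the dual of Lemma \ref{lem1}), $M$ is $u$-$S$-injective relative to $E(M)$; applying the defining property to the $u$-$S$-monomorphism $\iota$ shows that $\text{Hom}_R(\iota,M)\colon\text{Hom}_R(E(M),M)\to\text{Hom}_R(M,M)$ is a $u$-$S$-epimorphism, so there exist $s\in S$ and $\rho\colon E(M)\to M$ with $\rho\iota=s\,1_M$. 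Thus the $u$-$S$-exact sequence $0\to M\xrightarrow{\iota}E(M)\to E(M)/M\to 0$ is $u$-$S$-split (equivalently, this is the $u$-$S$-injective dual of Proposition \ref{propn1}(1) applied to $\iota$). By \cite[Lemma 2.8]{KMOZ}, $E(M)$ is then $u$-$S$-isomorphic to $M\oplus E(M)/M$. Since $E(M)$ is $u$-$S$-injective and $u$-$S$-injectivity is invariant under $u$-$S$-isomorphism, $M\oplus E(M)/M$ is $u$-$S$-injective, and since a direct summand of a $u$-$S$-injective module is $u$-$S$-injective (the $u$-$S$-injective duals of \cite[Proposition 2.14(3)]{ZQ} and \cite[Corollary 2.8(1)]{MM}), $M$ is $u$-$S$-injective.

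The only substantive ingredient is the $u$-$S$-injective dual of Proposition \ref{prop2}, that is, that each direct summand of a $u$-$S$-pseudo-injective module inherits the corresponding relative $u$-$S$-injectivity, which in turn rests on the dual of Lemma \ref{lem1}; both are available in \cite{MM2}, or can be obtained by dualizing the proofs given in Section \ref{d}. The remaining steps are routine bookkeeping with $u$-$S$-exact sequences; the one point worth checking explicitly is that $\iota$ is a $u$-$S$-monomorphism, so that the relative $u$-$S$-injectivity hypothesis applies to it.
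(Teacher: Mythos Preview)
Your proof is correct and follows essentially the same route as the paper's: both directions proceed identically, with the converse obtained by showing the inclusion $\iota\colon M\hookrightarrow E(M)$ $u$-$S$-splits, then invoking \cite[Lemma 2.8]{KMOZ} and the closure properties of $u$-$S$-injectivity. The only cosmetic difference is that the paper cites \cite[Theorem 2.6]{MM2} directly for the splitting of $\iota$, whereas you unpack that step via relative $u$-$S$-injectivity (the dual of Proposition~\ref{prop2}, i.e.\ \cite[Proposition 2.13]{MM2}); these amount to the same argument.
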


\begin{proof}
   Let $M$ be $u$-$S$-injective. Then $M\oplus E(M)$ is $u$-$S$-injective by \cite[ Proposition 4.7]{QK}. So by \cite[Remark 2.4 (2)]{MM2}, $M\oplus E(M)$ is $u$-$S$-pseudo-injective. Conversely, suppose that $M\oplus E(M)$ is $u$-$S$-pseudo-injective. Then by \cite[ Theorem 2.6]{MM2}, the monomorphism $i_M:M\to E(M)$ $u$-$S$-splits. So the exact sequence $0\to M\to E(M)\to \frac{E(M)}{M}\to 0$ $u$-$S$-splits. Hence by \cite[Lemma 2.8]{KMOZ}, $E(M)$ is $u$-$S$-isomorphic to $M\oplus \frac{E(M)}{M}$. But $E(M)$ is $u$-$S$-injective, so by \cite[ Proposition 4.7 (3)]{QK} and \cite[Corollary 2.8 (2)]{MM}, $M$ is $u$-$S$-injective.
\end{proof}

\begin{corollary}
  Let $R$ be a ring and $M$ an $R$-module. Then $M$ is injective if and only if $M\oplus E(M)$ is pseudo-injective. 
\end{corollary}

\section{Characterizations of some classes of rings}
In this section, we characterize certain classes of rings, such as $u$-$S$-semisimple and strongly $S$-perfect rings. We start with the following theorem that gives several characterizations of $u$-$S$-semisimple rings.

\begin{thm}\label{thm1}
Let $S$ be a multiplicative subset of a ring $R$. Then the following statements are equivalent: 
  \begin{enumerate}
          \item[(1)] $R$ is $u$-$S$-semisimple;
          \item[(2)] every $R$-module is $u$-$S$-pseudo-projective;
          \item[(3)] every direct sum of an $R$-module and a free $R$-module is $u$-$S$-pseudo-projective ($u$-$S$-pseudo-injective);
         \item[(4)] every direct sum of an $R$-module and a free $R$-module is $u$-$S$-quasi-projective ($u$-$S$-quasi-injective).
      \end{enumerate}
  \end{thm}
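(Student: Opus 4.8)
The plan is to prove the cycle $(1)\Rightarrow(2)\Rightarrow(3)\Rightarrow(4)\Rightarrow(1)$ for the ``pseudo-/quasi-projective'' readings of $(3)$ and $(4)$, and then to run the formally dual cycle $(1)\Rightarrow(4)\Rightarrow(3)\Rightarrow(1)$ for the parenthetical ``pseudo-/quasi-injective'' readings; together these yield all six statements equivalent.

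For $(1)\Rightarrow(2)$ I would invoke the standard characterization of $u$-$S$-semisimple rings from \cite{ZQ}, by which every $R$-module over a $u$-$S$-semisimple ring is $u$-$S$-projective, hence $u$-$S$-pseudo-projective by Remark \ref{rem1}(2). The implication $(2)\Rightarrow(3)$ is immediate, since a direct sum of an $R$-module and a free $R$-module is again an $R$-module. For $(3)\Rightarrow(4)$, given an $R$-module $M$ and a free module $F$ I would use the isomorphism $(M\oplus F)\oplus(M\oplus F)\cong(M\oplus M)\oplus(F\oplus F)$: by $(3)$ applied to the module $M\oplus M$ and the free module $F\oplus F$, this module is $u$-$S$-pseudo-projective, and Corollary \ref{cor1} then yields that $M\oplus F$ is $u$-$S$-quasi-projective.

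The main step is $(4)\Rightarrow(1)$. Let $N$ be an arbitrary $R$-module and fix an epimorphism $g\colon F\to N$ with $F$ free. Then $F\oplus N$ is a direct sum of the module $N$ and the free module $F$, so by $(4)$ it is $u$-$S$-quasi-projective, hence $u$-$S$-pseudo-projective by Remark \ref{rem1}(2); since $g$ is a ($u$-$S$-)epimorphism, Proposition \ref{propn1}(1) shows that $g$ $u$-$S$-splits, so the exact sequence $0\to\text{Ker}(g)\to F\xrightarrow{g} N\to 0$ is $u$-$S$-split and $F$ is $u$-$S$-isomorphic to $\text{Ker}(g)\oplus N$ by \cite[Lemma 2.8]{KMOZ}; since $F$ is $u$-$S$-projective, so is $N$. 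As $N$ was arbitrary, every $R$-module is $u$-$S$-projective, i.e.\ $R$ is $u$-$S$-semisimple by \cite{ZQ}. (Equivalently, the splitting can be read off directly: $\text{Hom}_R(N,g)\colon\text{Hom}_R(N,F)\to\text{Hom}_R(N,N)$ is a $u$-$S$-epimorphism by $u$-$S$-projectivity of $N$, so $gh=s1_N$ for some $h\colon N\to F$ and $s\in S$, and \cite[Lemma 2.4]{ZQ} finishes.)

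For the parenthetical statements one argues dually. For $(1)\Rightarrow(4)$: over a $u$-$S$-semisimple ring every $R$-module is $u$-$S$-injective \cite{ZQ,QK}, hence $u$-$S$-quasi-injective, so in particular every direct sum of a module and a free module is. The step $(4)\Rightarrow(3)$ is immediate, since $u$-$S$-quasi-injective modules are $u$-$S$-pseudo-injective (the analogue of Remark \ref{rem1}(2); see \cite{MM2}). For $(3)\Rightarrow(1)$: given a free module $F$ and a $u$-$S$-exact sequence $0\to A\xrightarrow{i} F\to C\to 0$, the module $A\oplus F$ is $u$-$S$-pseudo-injective by $(3)$, so the $u$-$S$-pseudo-injective analogue of Proposition \ref{propn1}(1) --- proved as there, from the $u$-$S$-pseudo-injective form of Proposition \ref{prop2} in \cite{MM2} and the $u$-$S$-injective-relative version of \cite[Lemma 2.15]{MM} --- shows that the $u$-$S$-monomorphism $i$ $u$-$S$-splits, whence the sequence $u$-$S$-splits; as $F$ was an arbitrary free module, $R$ is $u$-$S$-semisimple. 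I expect the real work to be in $(4)\Rightarrow(1)$ and its dual: the device is to feed the ambient free module (a free cover of $N$ in the projective case, a free module containing $A$ in the injective case) into a direct sum with the other module, so that pseudo-projectivity, resp.\ pseudo-injectivity, of that sum splits the relevant $u$-$S$-epimorphism, resp.\ $u$-$S$-monomorphism, via Proposition \ref{propn1}(1) and its dual; a subsidiary nuisance is that the $u$-$S$-pseudo-injective counterparts of Proposition \ref{prop2}, Proposition \ref{propn1} and Corollary \ref{cor1} used in the dual cycle have to be quoted from, or reproved in the manner of, \cite{MM,MM2}.
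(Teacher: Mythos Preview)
Your argument is correct, and it differs from the paper's in its overall architecture. The paper does not run a single cycle: it proves $(1)\Leftrightarrow(2)$, $(1)\Leftrightarrow(3)$, and $(1)\Leftrightarrow(4)$ as three separate equivalences, the last one being delegated entirely to \cite[Theorem 2.16, Proposition 3.8, Theorem 3.11]{MM}. In particular, the paper's $(3)\Rightarrow(1)$ (for both the projective and injective readings) goes through Proposition \ref{prop2} (resp.\ \cite[Proposition 2.13]{MM2}) to get that every module is $u$-$S$-projective (resp.\ $u$-$S$-injective) relative to an arbitrary free $F$, and then invokes \cite[Theorem 2.16]{MM} to conclude that $F$ is $u$-$S$-semisimple.

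Your route is more internal to the present paper. The doubling trick $(M\oplus F)^{(2)}\cong (M\oplus M)\oplus(F\oplus F)$ combined with Corollary \ref{cor1} is a neat way to pass from $(3)$ to $(4)$ without touching \cite{MM} directly, and your $(4)\Rightarrow(1)$ via a free cover and Proposition \ref{propn1} is exactly the paper's mechanism for $(2)\Rightarrow(1)$, just reused one step later in the cycle. The trade-off is that the paper's star-shaped proof makes each equivalence self-standing (useful if one wants, say, only $(1)\Leftrightarrow(3)$), whereas your cycle is more economical but makes the intermediate equivalences depend on one another. On the injective side your $(3)\Rightarrow(1)$ via the dual of Proposition \ref{propn1}(1) is essentially the same content as the paper's argument, since that dual is precisely what one gets from \cite[Proposition 2.13]{MM2} together with the relative-injectivity splitting lemma; the paper just packages it as ``relative injectivity for all $M$ implies $F$ is $u$-$S$-semisimple'' via \cite[Theorem 2.16]{MM}, while you unpack it sequence by sequence.
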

\begin{proof}
$(1)\Rightarrow (2)$. Let $R$ be a $u$-$S$-semisimple ring. Then every $R$-module is $u$-$S$-quasi-projective by \cite[Theorem 3.11]{MM}. So by Remark \ref{rem1} (2), every $R$-module is $u$-$S$-pseudo-projective. \\
$(2)\Rightarrow (1)$. Let $M$ be any $R$-module and let $f:P\to M$ be an epimorphism with $P$ projective. Then by (2), $P\oplus M$ is $u$-$S$-pseudo-projective. So by Proposition \ref{propn1}, $M$ is $u$-$S$-projective. Hence every $R$-module is $u$-$S$-projective. Therefore, $R$ is $u$-$S$-semisimple by \cite[Theorem 3.5]{ZQ}.\\
 $(1)\Rightarrow (3)$. This follows from $(1)\Rightarrow (2)$ and \cite[Theorem 2.14]{MM2}.\\
 $(3)\Rightarrow (1)$. Let $F$ be any free $R$-module. Then by (3), for any $R$-module $M$, $M\oplus F$ is $u$-$S$-pseudo-projective ($u$-$S$-pseudo-injective). By Proposition \ref{prop2} (1) (\cite[Proposition 2.13]{MM2}), $M$ is $u$-$S$-projective ($u$-$S$-injective) relative to $F$ for any $R$-module $M$. Thus every $R$-module is $u$-$S$-projective ($u$-$S$-injective) relative to $F$, By \cite[Theorem 2.16]{MM}, $F$ is $u$-$S$-semisimple. Hence, every free $R$-module is $u$-$S$-semisimple. Therefore, $R$ is $u$-$S$-semisimple.\\
$(1)\Leftrightarrow (4)$. This follows from \cite[Theorem 2.16, Proposition 3.8, and Theorem 3.11]{MM}.
\end{proof}

Recall that an $R$-module $F$ is called $u$-$S$-flat if the induced sequence $0\to A \otimes_R F\to  B \otimes_R F \to C\otimes_R F \to 0$ is $u$-$S$-exact for any $u$-$S$-exact sequence $0 \to A \to B \to C \to 0$ \cite{Z}, and that a ring $R$ is called a strongly $S$-perfect ring if every $u$-$S$-flat is projective \cite{AZ}. The following theorem characterizes strongly $S$-perfect rings.

\begin{thm}\label{thmp}
    Let $S$ be a multiplicative subset of a ring $R$. Then the following statements are equivalent: 
  \begin{enumerate}
          \item[(1)] $R$ is strongly $S$-perfect;
          \item[(2)] every $u$-$S$-flat is quasi-projective;
          \item[(3)] every $u$-$S$-flat is pseudo-projective.
      \end{enumerate}
  \end{thm}

\begin{proof}
    $(1)\Rightarrow (2)\Rightarrow (3)$. Clear. \\
$(3)\Rightarrow (1)$. Let $F$ be any $u$-$S$-flat $R$-module and $f:P\to F$ be an epimorphism with $P$ projective. Since $P$ is projective, it is $u$-$S$-flat by \cite[Corollary 2.11 and Proposition 2.13]{ZQ}. So by \cite[Proposition 3.4 (2)]{Z}, $P\oplus F$ is $u$-$S$-flat. By (3), $P\oplus F$ is pseudo-projective, and so by Corollary \ref{cor3}, $F$ is projective. Hence, every $u$-$S$-flat $R$-module is projective. Therefore, $R$ is a strongly $S$-perfect ring.
\end{proof}

The following theorem characterizes rings in which every $u$-$S$-quasi-projective module is $u$-$S$-projective.

\begin{thm}\label{thm2}
   Let $S$ be a multiplicative subset of a ring $R$. Then the following statements are equivalent:
   \begin{enumerate}
          \item[(1)] Every $u$-$S$-quasi-projective module is $u$-$S$-projective;
          \item[(2)] Every direct sum of two $u$-$S$-quasi-projective modules is $u$-$S$-projective;
          \item[(3)] Every direct sum of two $u$-$S$-quasi-projective modules is $u$-$S$-quasi-projective.
      \end{enumerate}
  \end{thm}

\begin{proof}
   $(1)\Rightarrow (2)$. Let $M$ and $N$ be two $u$-$S$-quasi-projective modules. Then by (1), $M$ and $N$ are $u$-$S$-projective modules. So $M\oplus N$ is $u$-$S$-projective by \cite[Proposition 2.14 (1)]{ZQ}. \\
   $(2)\Rightarrow (3)$. This is clear from Remark \ref{rem1} (2). \\
   $(3)\Rightarrow (1)$. Let $M$ be a $u$-$S$-quasi-projective module. Let $f:P\to M$ be an epimorphism with $P$ projective. Then by (3), $P\oplus M$ is $u$-$S$-quasi-projective. Let $p_1:P\oplus M\to P$ and $p_2:P\oplus M\to M$ be the natural projections, and let $i_2:M\to P\oplus M$ be the natural injection. Since $P\oplus M$ is $u$-$S$-quasi-projective and $fp_1:P\oplus M\to M$ is an epimorphism, then by \cite[ Theorem 2.3]{MM}, there is $e\in \text{End}_R(P\oplus M)$ such that the following diagram 
 \[\xymatrix{
& P\oplus M \ar[d]^{sp_2}\\
 P\oplus M \ar@{<-}[ru]^-{ e}\ar[r]_{fp_1}&M
}\] commutes for some $s\in S$. So $sp_2=fp_1e$. Since $p_2i_2=1_M$, then $s1_M=sp_2i_2=fp_1ei_2$. Let $q:=p_1ei_2:M\to P$. Then $s1_M=fq$. So by \cite[Lemma 2.4]{ZQ}, the exact sequence $0\to \text{Ker}(f)\to P\xrightarrow{f} M\to 0$ $u$-$S$-splits. By \cite[Lemma 2.8]{KMOZ}, $P$ is $u$-$S$-isomorphic to $\text{Ker}(f)\oplus M$. Since $P$ is $u$-$S$-projective, $\text{Ker}(f)\oplus M$ is $u$-$S$-projective by \cite[ Proposition 2.14 (3)]{ZQ}. Hence $M$ is $u$-$S$-projective by \cite[Corollary 2.8 (1)]{MM}. Thus (1) holds. 
\end{proof}

The following theorem characterizes rings in which every $u$-$S$-pseudo-projective module is $u$-$S$-projective.
    
\begin{thm}\label{thm3} 
  Let $S$ be a multiplicative subset of a ring $R$. Then the following statements are equivalent:
    \begin{enumerate}
          \item[(1)] Every $u$-$S$-pseudo-projective module is $u$-$S$-projective.
          \item[(2)] Every direct sum of two $u$-$S$-pseudo-projective modules is $u$-$S$-projective.
          \item[(3)] Every direct sum of two $u$-$S$-pseudo-projective modules is $u$-$S$-pseudo-projective. 
      \end{enumerate}
\end{thm}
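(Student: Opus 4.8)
The plan is to prove the cycle $(1)\Rightarrow(2)\Rightarrow(3)\Rightarrow(1)$, modeled closely on the proof of Theorem \ref{thm2}, using the structural results already established for $u$-$S$-pseudo-projective modules.

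For $(1)\Rightarrow(2)$: given two $u$-$S$-pseudo-projective modules $M$ and $N$, hypothesis (1) makes each of them $u$-$S$-projective, and then $M\oplus N$ is $u$-$S$-projective by \cite[Proposition 2.14 (1)]{ZQ}. For $(2)\Rightarrow(3)$: this is immediate from Remark \ref{rem1} (2), since a $u$-$S$-projective module is in particular $u$-$S$-pseudo-projective. These two steps are routine.

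The substantive implication is $(3)\Rightarrow(1)$. Let $M$ be $u$-$S$-pseudo-projective; I want to show $M$ is $u$-$S$-projective. Choose an epimorphism $f:P\to M$ with $P$ projective (hence $P$ is $u$-$S$-projective by \cite[Corollary 2.11]{ZQ}, and so $P$ is $u$-$S$-pseudo-projective by Remark \ref{rem1} (2)). Then $P$ and $M$ are both $u$-$S$-pseudo-projective, so by (3) the module $P\oplus M$ is $u$-$S$-pseudo-projective. Now apply Proposition \ref{propn1} (1): the $u$-$S$-epimorphism $f:P\to M$ $u$-$S$-splits, so the exact sequence $0\to\text{Ker}(f)\to P\xrightarrow{f}M\to 0$ is $u$-$S$-split. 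By \cite[Lemma 2.8]{KMOZ}, $P$ is $u$-$S$-isomorphic to $\text{Ker}(f)\oplus M$. Since $P$ is $u$-$S$-projective, \cite[Proposition 2.14 (3)]{ZQ} gives that $\text{Ker}(f)\oplus M$ is $u$-$S$-projective, and finally \cite[Corollary 2.8 (1)]{MM} yields that the direct summand $M$ is $u$-$S$-projective. Hence (1) holds, completing the cycle.

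The only place any real work is needed is $(3)\Rightarrow(1)$, and even there the heavy lifting is done by Proposition \ref{propn1} (2) — indeed, that proposition already packages exactly the argument ``$P\oplus M$ $u$-$S$-pseudo-projective $\Rightarrow$ $M$ $u$-$S$-projective'' whenever $P\twoheadrightarrow M$ with $P$ projective, so one may simply invoke Proposition \ref{propn1} (2) directly after observing via (3) that $P\oplus M$ is $u$-$S$-pseudo-projective. I anticipate no genuine obstacle: the main point to get right is that $P$ must itself be $u$-$S$-pseudo-projective in order to feed the pair $\{P,M\}$ into hypothesis (3), which follows from projectivity of $P$ through the chain projective $\Rightarrow$ $u$-$S$-projective $\Rightarrow$ $u$-$S$-quasi-projective $\Rightarrow$ $u$-$S$-pseudo-projective.
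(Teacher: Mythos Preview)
Your proof is correct and follows essentially the same approach as the paper: the cycle $(1)\Rightarrow(2)\Rightarrow(3)\Rightarrow(1)$ with the key step $(3)\Rightarrow(1)$ reduced to Proposition \ref{propn1} (2). Your version is in fact slightly more careful, since you explicitly justify why the projective module $P$ is $u$-$S$-pseudo-projective before applying hypothesis (3) to the pair $\{P,M\}$, a point the paper leaves implicit.
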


\begin{proof}
$(1)\Rightarrow (2)$. Let $M$ and $N$ be two $u$-$S$-pseudo-projective modules. Then $M$ and $N$ are $u$-$S$-projective modules and so $M\oplus N$ is $u$-$S$-projective. \\
$(2)\Rightarrow (3)$. This follows from Remark \ref{rem1} (2). \\
$(3)\Rightarrow (1)$. Let $M$ be any $u$-$S$-pseudo-projective module. Let $f:P\to M$ be an epimorphism with $P$ projective. Then by (3), $P\oplus M$ is $u$-$S$-pseudo-projective. Thus $M$ is $u$-$S$-projective by Proposition \ref{propn1}.
\end{proof}

An application of Theorem \ref{thm3} is the following example, which shows that a direct sum of two $u$-$S$-pseudo-projective modules need not be $u$-$S$-pseudo-projective. 

\begin{example}\label{ex2}
Let $R=\mathbb{Z}$ and $S=\mathbb{Z}^+$. By Example \ref{ex1}, there is an $R$-module $U$ that is $u$-$S$-pseudo-projective but not $u$-$S$-projective. So by Theorem \ref{thm3}, there are two $u$-$S$-pseudo-projective $R$-modules $A$ and $B$ such that $A\oplus B$ is not $u$-$S$-pseudo-projective.  

\end{example}

\textbf{Conflict of interest:} The authors declare that they have no conflict of interest.\\[0.2cm]

\end{document}